\newtheorem{theorem}{Theorem}
\newtheorem{lemma}{Lemma}
\begin{document}

\title[Sums of Powers of Primes]{Sums of Powers of Primes II}

\author{Lawrence C. Washington}
\address{Dept. of Mathematics, Univ. of Maryland, College Park, MD, 20742 USA}\email{lcw@umd.edu}
\keywords{Oscillations, Riemann Hypothesis}
\subjclass[2010] {Primary: 11N05, Secondary: 11M26}
\maketitle

\begin{abstract} For a real number $k$, define $\pi_k(x) = \sum_{p\le x} p^k$. When $k>0$, we prove that
$$
\pi_k(x) - \pi(x^{k+1}) = \Omega_{\pm}\left(\frac{x^{\frac12+k}}{\log x} \log\log\log x\right)
$$
as $x\to\infty$, and we prove a similar result when $-1<k<0$. This strengthens a result in a paper by J. Gerard and the author and it corrects a flaw in a proof in that paper.
We also quantify the observation from that paper that $\pi_k(x) - \pi(x^{k+1})$ is usually negative when $k>0$ and usually positive when $-1<k<0$.
\end{abstract}

For a real number $k$, define 
$$
\pi_k(x) = \sum_{p\le x} p^k,  \qquad  \psi_k(x)= \sum_{n\le x} \Lambda(n) n^k,
$$
where the first sum runs over prime numbers and $\Lambda(n) = \log p$ if $n=p^m$ (with $m\ge 1$) is a prime power, and $\Lambda(n) = 0$ otherwise. Then $\pi_0(x) = \pi(x)$, the number of primes less than or equal to $x$, and $\psi_0(x)=\psi(x)$ is the Chebyshev function ({\it Note:} There are other uses of the notation $\psi_k$ in the literature that are different from the present one). As usual, we use the notation $f(x) = \Omega_{\pm}(g(x))$ to indicate that there exists a constant $c>0$ such that $f(x)> c g(x)$ for a sequence of $x\to\infty$ and $f(x) < -cg(x)$ for a sequence of $x\to\infty$. Our goal is to prove the following:
\begin{theorem} Let  $\theta_0$  be the supremum of the real parts of the zeros of $\zeta(s)$, and let $\epsilon>0$. As $x\to\infty$,
$$
\pi_k(x) - \pi(x^{k+1}) = \begin{cases}
\Omega_{\pm} (x^{\theta_0+k-\epsilon}) \quad \text{ if } k>0,\\
\Omega_{\pm}( x^{(k+1)(\theta_0-\epsilon)}) \quad \text{ if } -1< k < 0 \text{ and } \theta_0<1.
\end{cases}
$$

If the Riemann Hypothesis is true,
$$
\pi_k(x) - \pi(x^{k+1}) = \begin{cases} \Omega_{\pm}\left(\frac{x^{k+\frac12} \log\log\log x}{\log x}\right) \quad \text{ if } k>0,\\
 \Omega_{\pm}\left(\frac{x^{\frac{k+1}2}\log\log\log x}{\log x} \right) \quad \text{ if } k<0.\end{cases}
$$
\end{theorem}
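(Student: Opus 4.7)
The natural intermediate quantity is the smoother difference
\[
\Delta(x) := \psi_k(x) - \frac{1}{k+1}\,\psi(x^{k+1}),
\]
from which one recovers $\pi_k(x)-\pi(x^{k+1})$ by Abel summation at a cost of one factor of $\log x$ in the denominator plus a tail integral that turns out to contribute at a strictly smaller order (after one integration by parts to exploit the oscillatory structure of $\Delta$). Passing between $\psi_k$ and $\vartheta_k=\sum_{p\le x}p^k\log p$ (and similarly between $\psi$ and $\vartheta$) introduces only a prime-power error of size $O(x^{(k+1)/2}\log x)$, comfortably dominated by the claimed bounds under the hypotheses of the theorem.

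Since $\sum_n\Lambda(n)n^{k-s}=-\zeta'(s-k)/\zeta(s-k)$ has a simple pole at $s=k+1$ and further poles at $s=\rho+k$ for each nontrivial zero $\rho$ of $\zeta$, Perron's formula yields
\[
\psi_k(x) = \frac{x^{k+1}}{k+1} - \sum_\rho \frac{x^{\rho+k}}{\rho+k} + O(\log^2 x),
\]
while substituting $x\mapsto x^{k+1}$ in the classical explicit formula for $\psi$ gives
\[
\frac{\psi(x^{k+1})}{k+1} = \frac{x^{k+1}}{k+1} - \frac{1}{k+1}\sum_\rho \frac{x^{(k+1)\rho}}{\rho} + O(\log^2 x).
\]
The polar main terms cancel, leaving $\Delta(x)$ as a difference of two sums over zeros whose maximal real exponents are $\theta_0+k$ and $(k+1)\theta_0$ respectively. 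Since $\theta_0+k-(k+1)\theta_0=k(1-\theta_0)$, the first sum dominates when $k>0$, and the second dominates when $-1<k<0$ with $\theta_0<1$.

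For the unconditional $\Omega_\pm$ bounds the plan is to apply Landau's theorem to the Mellin transform of $\Delta(x)$: were $\Delta(x)\le c\,x^{\theta_0+k-\epsilon}$ to hold for all sufficiently large $x$, the integral $\int_1^\infty(cx^{\theta_0+k-\epsilon}-\Delta(x))x^{-s-1}\,dx$ would have nonnegative integrand and converge for $\Re s$ just above $\theta_0+k-\epsilon$, forcing analytic continuation of $-\zeta'(s-k)/\zeta(s-k)$ past singularities with real part arbitrarily close to $\theta_0+k$, a contradiction; a symmetric argument handles $\Omega_-$, and the case $-1<k<0$ is treated analogously with the second sum playing the dominant role. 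For the conditional refinement under RH, I would invoke the Ingham--Pintz sharpening of Littlewood's oscillation theorem for the dominating sum---whose zeros now all lie on the critical line---and observe that the subdominant sum, of strictly smaller order $x^{(k+1)/2}$ versus $x^{1/2+k}$ (or the reverse when $k<0$), cannot absorb the $\log\log\log x$ gain.

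\textbf{Main obstacle.} The principal technical hurdle is descending cleanly from $\Delta(x)$ to $\pi_k(x)-\pi(x^{k+1})$ without losing the triple logarithm. A naive bound on the Abel tail $\int_2^x \Delta(t)\,t^{-1}\log^{-2}t\,dt$ is only comparable to $\Delta(x)/\log x$; one must use the oscillatory form of $\Delta$ from the explicit formula to integrate by parts once and reduce this to $O(\Delta(x)/\log^2 x)$, which is then genuinely negligible against $\Delta(x)/\log x$. A secondary difficulty is ruling out conspiratorial cancellation between the two zero-sums in $\Delta$ on the exceptional sequences produced by the Littlewood--Ingham construction; the strict inequality between the two exponents settles this in principle, but a truncated explicit formula with uniform error is needed to make the subdominant sum provably smaller along those sequences.
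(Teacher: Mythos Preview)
Your plan is essentially the paper's: pass to the $\psi$-level, produce oscillations via the Littlewood--Ingham Diophantine argument applied to the explicit formula, and descend back to $\pi_k-\pi(x^{k+1})$ by partial summation, controlling the Abel tail $\int_2^x(\cdots)\,t^{-1}\log^{-2}t\,dt$ via one integration by parts against the \emph{integrated} explicit formula (this is exactly the paper's Lemmas~3--5). The one structural difference is your intermediate quantity. The paper works with $\psi_k(x)-x^{k+1}/(k+1)$, which carries a \emph{single} zero sum; the comparison with $\pi(x^{k+1})$ is then absorbed by the RH bound $\pi(y)=\mathrm{li}(y)+O(y^{1/2}\log y)$ when $k>0$, and for $-1<k<0$ by the separate formula $\pi_k(x)-\pi(x^{k+1})=-E(x^{k+1})+x^kE(x)-k\int_2^x t^{k-1}E(t)\,dt+O(1)$ from the earlier paper together with Littlewood's result for $E$. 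Your $\Delta(x)=\psi_k(x)-\psi(x^{k+1})/(k+1)$ instead carries \emph{two} zero sums, which is precisely why you run into the ``conspiratorial cancellation'' worry; the paper's decomposition sidesteps that secondary difficulty entirely, at the cost of treating the two sign ranges of $k$ by different formulas. Both routes are valid.

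Two small corrections. First, your prime-power error is understated: for $k>0$ one has $\psi_k(x)-\vartheta_k(x)\sim x^{k+1/2}/(2k+1)$, not $O(x^{(k+1)/2}\log x)$; after dividing by $\log x$ this is still beaten by the $\log\log\log x$ oscillation, so the argument survives, but it is not ``comfortable'' (the paper records this as the $O(x^{k+1/2}/\log x)$ term in its Lemma~5). Second, the pointwise explicit formula you write for $\psi_k$ has a conditionally convergent zero sum; the paper avoids ever invoking it pointwise, using only the integrated form $\int_2^x\psi_k(t)\,dt$ (absolutely convergent, its Lemma~3) and a short-interval average (its Lemma~6), exactly as in Ingham's treatment of Littlewood's theorem. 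You should do likewise.
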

We are not able to treat the case where $\theta_0=1$ and $k<0$. 

A paper of J.  Gerard and the author \cite{GW} showed that $\pi_k(x)$ is asymptotic to $\pi(x^{k+1})$
(see also \cite{Ja} and \cite{SZ}) 
and proved the first half of the theorem. Unfortunately, this proof was based on an incorrect formula.
When the correct formula is used, the proof in \cite{GW} is valid only when the Riemann Hypothesis is false. The main work of the present paper uses a technique of Littlewood to establish the second half of the theorem, namely under the assumption that the Riemann Hypothesis is true.
The second half implies the first half when $\theta_0=1/2$. 

In \cite{GW}, a heuristic explanation was given for why $\pi_k(x) - \pi(x^{k+1})$ is usually negative when $k>0$ and usually positive when $-1<k<0$.
In Sections 2 and 3 of the present paper, the following more quantitative results are proved. The methods in these sections
were inspired by \cite{Jo}.
\begin{theorem}
$$
\int_1^{\infty} \frac{\pi_k(t) - \pi(t^{k+1})}{t^{k+2}} dt = \frac{-1}{k+1}\log(k+1) \quad \begin{cases} <0 \text{ if } k>0, \\  >0 \text{ if } -1 < k < 0.\end{cases}
$$
\end{theorem}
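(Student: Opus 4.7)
The plan is to evaluate the integral by truncating at $X$, exchanging each sum with the integral via Fubini, and only at the end letting $X\to\infty$. The catch is that the integrals of $\pi_k(t)/t^{k+2}$ and of $\pi(t^{k+1})/t^{k+2}$ individually diverge as $X\to\infty$ (reflecting the divergence of $\sum_p 1/p$), so the truncation must be kept throughout and the two divergences cancelled before passing to the limit.

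First, swapping sum and integral gives
$$
\int_1^X \frac{\pi_k(t)}{t^{k+2}}\,dt \;=\; \sum_{p\le X} p^k \int_p^X \frac{dt}{t^{k+2}} \;=\; \frac{1}{k+1}\sum_{p\le X}\frac{1}{p} \;-\; \frac{\pi_k(X)}{(k+1)X^{k+1}},
$$
and the substitution $u=t^{k+1}$ (legitimate for both sign ranges of $k$ since $k+1>0$) followed by the same Fubini step yields
$$
\int_1^X \frac{\pi(t^{k+1})}{t^{k+2}}\,dt \;=\; \frac{1}{k+1}\int_1^{X^{k+1}}\!\frac{\pi(u)}{u^2}\,du \;=\; \frac{1}{k+1}\sum_{p\le X^{k+1}}\frac{1}{p} \;-\; \frac{\pi(X^{k+1})}{(k+1)X^{k+1}}.
$$
Subtracting, the truncated difference integral is
$$
\int_1^X \frac{\pi_k(t)-\pi(t^{k+1})}{t^{k+2}}\,dt \;=\; \frac{1}{k+1}\Bigl(\sum_{p\le X}\frac{1}{p} - \sum_{p\le X^{k+1}}\frac{1}{p}\Bigr) - \frac{\pi_k(X)-\pi(X^{k+1})}{(k+1)X^{k+1}}.
$$

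Now Mertens's theorem $\sum_{p\le y}1/p = \log\log y + M + o(1)$ makes the $M$ and the $\log\log X$ cancel, leaving
$$
\sum_{p\le X}\frac{1}{p} - \sum_{p\le X^{k+1}}\frac{1}{p} \;=\; \log\log X - \log\bigl((k+1)\log X\bigr) + o(1) \;=\; -\log(k+1) + o(1).
$$
The boundary term vanishes because the asymptotic $\pi_k(x)\sim \pi(x^{k+1})\sim x^{k+1}/((k+1)\log x)$ already established in \cite{GW} forces $\pi_k(X)-\pi(X^{k+1}) = o(X^{k+1})$. Letting $X\to\infty$ produces the claimed value $-\log(k+1)/(k+1)$. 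The sign statement is then immediate: for $k>0$ one has $\log(k+1)>0$ and the integral is negative, while for $-1<k<0$ one has $\log(k+1)<0$ and the integral is positive.

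I do not anticipate a serious obstacle. The only subtlety is recognizing that one cannot split the integrand and integrate the two pieces separately over $[1,\infty)$; the truncation $X$ must be retained, and the two divergent $\sum 1/p$ terms combined, before Mertens is applied.
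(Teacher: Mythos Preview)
Your argument is correct and is essentially the same as the paper's: the paper also truncates at $x$, evaluates each integral via Abel summation (your Fubini swap is the same computation) to obtain the boundary term plus $\frac{1}{k+1}\sum_{p\le x}1/p$ and $\frac{1}{k+1}\sum_{p\le x^{k+1}}1/p$, applies Mertens, subtracts, and lets $x\to\infty$. The only cosmetic difference is that the paper kills the two boundary terms individually (each is $O(1/\log x)$ by the prime number theorem), whereas you combine them first and invoke $\pi_k(X)-\pi(X^{k+1})=o(X^{k+1})$.
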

 
 \begin{theorem}
Let $0<k\le 10.32$. The following are equivalent:
\begin{enumerate}
\item The Riemann Hypothesis.
\item
$$
\int_1^x \left(\pi_k(t) - \pi(t^{k+1})\right) dt  < 0 \qquad \text{for all $x$ sufficiently large.}
$$

\end{enumerate}
\end{theorem}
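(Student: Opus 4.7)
\medskip

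\noindent\textbf{Proof proposal.} The equivalence is proven by treating the two directions separately, with the nontrivial work concentrated in $(1)\Rightarrow(2)$.

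\medskip

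For $(2)\Rightarrow(1)$, I would argue the contrapositive. If RH fails, Theorem~1 gives
$$
\pi_k(t) - \pi(t^{k+1}) = \Omega_\pm\bigl(t^{\theta_0+k-\epsilon}\bigr)
$$
with $\theta_0>1/2$. The zero(s) of $\zeta$ with $\operatorname{Re}(\rho)=\theta_0$ then contribute to $F(X) := \int_1^X(\pi_k(t)-\pi(t^{k+1}))\,dt$ an oscillation of size at least $X^{\theta_0+k+1-\epsilon}$, which exceeds the smooth contribution (of order $X^{k+3/2}/\log X$; see below). Consequently $F(X)$ changes sign infinitely often, contradicting~(2). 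This argument is insensitive to the precise value of $k$.

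\medskip

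For $(1)\Rightarrow(2)$, assume RH and decompose $F(X)=M(X)+E(X)$ into a smooth ``main term'' $M$ and an oscillating contribution $E$ coming from the nontrivial zeros; the goal is to show $|E(X)|<|M(X)|$ for $X$ large, provided $0<k\le 10.32$. The smooth part is computed from the full Riemann explicit formula,
$$
\pi(y) = \sum_{n\ge 1}\frac{\mu(n)}{n}\Bigl[\operatorname{Li}(y^{1/n}) - \sum_\rho \operatorname{Li}(y^{\rho/n})\Bigr] + O(1),
$$
applied both to $\pi(t^{k+1})$ (with $y=t^{k+1}$) and, via Abel summation, to $\pi_k(t)$. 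The $n=1$ leading pieces $\operatorname{Li}(y)$ cancel in $\pi_k(t)-\pi(t^{k+1})$ up to the constant $-\int_{2^{1/(k+1)}}^{2} t^k/\log t\,dt$ (consistent with Theorem~2), while the $n=2$ prime-power correction $-\tfrac{1}{2}\operatorname{Li}(y^{1/2})$ produces for $k>0$ a net smooth term of leading order $-t^{k+1/2}/[(2k+1)\log t]$ in the integrand. Integrating gives
$$
M(X) \sim -\frac{2\,X^{k+3/2}}{(2k+1)(2k+3)\log X},
$$
which is negative and of order $X^{k+3/2}/\log X$.

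\medskip

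The oscillating part $E(X)$ arises from integrating the zero sums; its dominant contribution takes the form
$$
-\sum_\rho \frac{X^{\rho+k+1}}{(\rho+k)(\rho+k+1)\log X},
$$
of the same order $X^{k+3/2}/\log X$ under RH. The heart of the proof is a sharp upper bound on $|E(X)|$ that lies strictly below $|M(X)|$ for $X$ sufficiently large. This is the principal technical obstacle: it demands explicit numerical bookkeeping over zeros of $\zeta$, combining bulk estimates (via values such as $\sum_\rho 1/|\rho|^2\approx 0.046$) with the exact contribution of the lowest-lying nontrivial zeros. The precise threshold $k=10.32$ is the value at which this numerical comparison between the constants arising from $M$ and $E$ just fails.
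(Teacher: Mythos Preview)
Your treatment of $(1)\Rightarrow(2)$ is essentially the paper's argument, just organized around Riemann's explicit formula for $\pi$ rather than around the integrated explicit formula for $\psi_k$. Both routes produce the same main term $M(X)\sim -X^{k+3/2}/\bigl((2k+1)(k+\tfrac32)\log X\bigr)$ from the prime-square correction $\Pi_k-\pi_k$, and both bound the zero contribution by $\sum_\rho 1/\gamma^2<0.04621$; the threshold $10.32$ is exactly where $0.04621$ meets $1/(2k+1)$. One small correction: the paper obtains $10.32$ from the bulk estimate alone, with no input from individual low-lying zeros (it notes that using them would only nudge the threshold slightly). The paper's route through $\psi_k$ and the absolutely convergent sum in Lemma~3 is technically cleaner than working directly with the conditionally convergent zero sum inside Riemann's formula for $\pi$, but the underlying idea is the same.

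Your $(2)\Rightarrow(1)$ sketch, however, has a real gap. From $\pi_k(t)-\pi(t^{k+1})=\Omega_\pm(t^{\theta_0+k-\epsilon})$ you cannot conclude that the integral $F(X)$ inherits oscillations of size $X^{\theta_0+k+1-\epsilon}$: an $\Omega_\pm$ statement says nothing about the measure of the set where the integrand is large, so the integral could in principle remain one-signed. Nor can you simply assert that ``the zeros with $\operatorname{Re}(\rho)=\theta_0$ contribute an oscillation to $F(X)$'': the supremum $\theta_0$ need not be attained, and even if it is, one must rule out cancellation among zero contributions. The paper sidesteps all of this by working with $F$ directly: it computes the Mellin-type integral $G(s)=\int_1^\infty\bigl(F(t)-t^{\theta_0+k+1-\epsilon}\bigr)t^{-s-2}\,dt$, identifies a singularity of $G$ at $s=\rho+k$ for a suitable zero $\rho$ with $\operatorname{Re}(\rho)>\theta_0-\epsilon$, and then invokes Landau's theorem to conclude that $F(x)>x^{\theta_0+k+1-\epsilon}$ along a sequence $x\to\infty$. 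That Tauberian step is the missing ingredient in your argument.
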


\section{Proof of Theorem 1}
\begin{proof}
When $k<0$ and $\theta_0<1$, the result was proved in \cite[p. 174]{GW}.
It was also proved that
$$
\pi_k(x)- \pi(x^{k+1}) = -E(x^{k+1})+x^kE(x) - k\int_2^x t^{k-1}E(t) dt  + O(1),
$$
where $E(x) = \pi(x) - \text{li}(x)$.  When the Riemann Hypothesis holds, the estimate $E(y)= O(y^{1/2}\log y)$  (see \cite[Theorem 13.1]{MV}), combined with Littlewood's oscillation result (see below, or \cite[p. 479]{MV}),
yields the stronger statement given in the second half of the theorem:
\begin{align*}
\pi_k(x)- \pi(x^{k+1}) &=\Omega_{\pm}\left(\frac{x^{\frac{k+1}{2}}\log\log\log x}{\log x}\right) +x^kO\left(x^{1/2}\log x\right) \\
&+O\left(\int_2^x t^{k-1}t^{1/2}\log t dt\right)  + O(1)\\
&= \Omega_{\pm}\left(\frac{x^{\frac{k+1}{2}}\log\log\log x}{\log x}\right),
\end{align*}
since $(k+1)/2 > k+\frac12$ when $k<0$.

In the above, we have used the following well-known lemma. Since we also use it several times in the following, we state it explicitly.

\begin{lemma} Let $r$ be a real number (positive or negative) and let $\ell>-1$. Then
$$
\int_2^x t^{\ell}\log^r t\,  dt = \frac{1}{\ell+1}x^{\ell+1}\log^r x + O\left(x^{\ell+1}\log^{r-1} x\right)
$$
as $x\to\infty$.
\end{lemma}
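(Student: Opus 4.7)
The plan is to prove this by a single integration by parts, with $u = \log^r t$ and $dv = t^\ell\,dt$; the main term is recovered from the boundary contribution at $x$, and the remaining integral is shown to be swallowed by the claimed error.

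Carrying this out, with $v = t^{\ell+1}/(\ell+1)$ (legitimate since $\ell > -1$, so the antiderivative is well-defined and bounded at the lower endpoint), integration by parts gives
$$
\int_2^x t^\ell \log^r t\,dt = \frac{x^{\ell+1}\log^r x}{\ell+1} - \frac{2^{\ell+1}\log^r 2}{\ell+1} - \frac{r}{\ell+1}\int_2^x t^\ell \log^{r-1} t\,dt.
$$
The middle boundary term is $O(1)$, which is absorbed into $O(x^{\ell+1}\log^{r-1} x)$ since $\ell+1 > 0$. The case $r=0$ is trivial (the error integral disappears), so assume $r\neq 0$. What remains is to show that
$$
J(x) := \int_2^x t^\ell \log^{r-1} t\,dt = O\!\left(x^{\ell+1}\log^{r-1} x\right).
$$

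The bound on $J(x)$ is the only real obstacle, and it splits into two easy cases depending on the sign of $r-1$. If $r \geq 1$, then $\log^{r-1} t$ is nondecreasing on $[2,x]$, so $\log^{r-1} t \leq \log^{r-1} x$ and $J(x) \leq \log^{r-1} x \int_2^x t^\ell\,dt = O(x^{\ell+1}\log^{r-1} x)$. If $r < 1$, then $\log^{r-1} t$ is decreasing, and I would split the integral at $\sqrt{x}$: on $[\sqrt{x},x]$ the inequality $\log t \geq \tfrac12 \log x$ gives $\log^{r-1} t \leq 2^{1-r}\log^{r-1} x$, contributing $O(x^{\ell+1}\log^{r-1} x)$; on $[2,\sqrt{x}]$ the factor $\log^{r-1} t$ is bounded by the constant $(\log 2)^{r-1}$, so that piece is $O(x^{(\ell+1)/2})$, which is $o(x^{\ell+1}\log^{r-1} x)$.

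Combining the boundary terms and the bound on $J(x)$ yields the stated asymptotic. The main obstacle, as noted, is just the careful bookkeeping for $r-1<0$, where $\log^{r-1}t$ is unbounded near $t=1$ and one has to break the interval of integration so that $\log t$ is comparable to $\log x$ on the dominant part.
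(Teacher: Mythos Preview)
Your proof is correct and follows essentially the same approach as the paper: integrate by parts once, then bound the residual integral $\int_2^x t^\ell \log^{r-1} t\,dt$ by splitting the range of integration. The only cosmetic difference is that you organize the case analysis by the sign of $r-1$ and split at $\sqrt{x}$, whereas the paper organizes by the sign of $\ell$ and splits at $x^\alpha$ using an auxiliary $\epsilon$ to absorb the logarithm; your arrangement is marginally tidier since the boundedness of $\log^{r-1} t$ on $[2,\infty)$ when $r<1$ lets you avoid the $\epsilon$-trick.
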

\begin{proof}
$$
\int_2^x t^{\ell}\log^r t \, dt = \frac{1}{\ell+1}x^{\ell+1}\log^r x + O(1)  - \frac{1}{\ell+1}\int_2^x r t^{\ell}\log^{r-1}t\,  dt.
$$
 When $\ell > 0$ and $t$ is sufficiently large, $t^{\ell}\log^{r-1} t$ is increasing, so 
$$
\int_2^x  t^{\ell}\log^{r-1} t\, dt \le x^{\ell+1}\log^{r-1} x,
$$
for large $x$.

Now suppose $-1<\ell\le 0$. Choose $0<\alpha<1$ and choose $\epsilon$ with $0<\epsilon < (\ell+1)(1-\alpha)/\alpha$. Then $\alpha(\ell+1+\epsilon) <\ell+1$, so
when $x$ is sufficiently large, 
\begin{align*}
\int_2^x  t^{\ell}\log^{r-1} t\, dt &= \int_2^{x^{\alpha}} t^{\ell}\log^{r-1} t\, dt+\int_{x^{\alpha}}^x t^{\ell}\log^{r-1}t\, dt\\
&= O\left( \int_2^{x^{\alpha}} t^{\ell+\epsilon} dt\right) +O\left((\log^{r-1} x)\int_{x^{\alpha}}^x t^{\ell} dt\right)\\
& = O\left(x^{\alpha(\ell+1+\epsilon)}\right) +  O\left(x^{\ell+1}\log^{r-1} x\right)\\
&=   O\left(x^{\ell+1}\log^{r-1} x\right).
\end{align*}
\end{proof}

For the remainder of this section, we are interested in $k>0$, although we state the lemmas in forms that hold for $k>-1$.

Let
\begin{align}
 \Pi_k(x) &= \pi_k(x) + \frac12\pi_{2k}(x^{1/2})  + \frac13 \pi_{3k}(x^{1/3}) + \frac14\pi_{4k}(x^{1/4}) + \cdots \label{Correct}\\
 &=    \sum_{n\le x} \frac{\Lambda(n) n^k}{\log n}.
\end{align}
In \cite{GW}, the formula (\ref{Correct}) was given incorrectly (it used $\pi_k(x^{1/2}), \pi_k(x^{1/3})$, etc. instead of $\pi_{2k}(x^{1/2}), \pi_{3k}(x^{1/3})$, etc.).

\begin{lemma}\label{Lemma1}
Let $k>-1$. Then
$$
\Pi_k(x) = \pi_k(x) +\frac12 \pi_{2k}(x^{1/2}) + O(x^{k+\frac13}) + O(\log x)
$$
and
$$
\Pi_k(x) = \pi_k(x) + O\left(\frac{x^{k+\frac12}}{\log x}\right) + O(\log x).
$$
\end{lemma}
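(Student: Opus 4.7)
\emph{Proof plan.} The plan is to start from the identity
$$
\Pi_k(x) - \pi_k(x) - \tfrac12\pi_{2k}(x^{1/2}) = \sum_{m\ge 3}\frac{1}{m}\pi_{mk}(x^{1/m}),
$$
which follows immediately from the definition of $\Pi_k$, and to bound the sum on the right. The sum is effectively finite, terminating at $m=\lfloor\log_2 x\rfloor$ since $\pi_{mk}(x^{1/m})=0$ once $x^{1/m}<2$. I would estimate each $\pi_{mk}(y)=\sum_{p\le y}p^{mk}$ by comparison with $\sum_{n\le y}n^{mk}$, which gives $O(y^{mk+1})$ when $mk>-1$, $O(\log y)$ when $mk=-1$, and $O(1)$ when $mk<-1$.

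For the first formula, I would isolate the $m=3$ term, which contributes $O(x^{k+1/3})$ (or $O(\log x)$, or $O(1)$, in the borderline and convergent cases). For $m\ge 4$ each term is at worst $O(x^{k+1/m}/m)+O((\log x)/m)+O(1/m)$, and summing over $m$ gives a total of $O(x^{k+1/4}\log\log x) + O(\log x)$. Since $x^{k+1/4}\log\log x = o(x^{k+1/3})$, the whole tail collapses into $O(x^{k+1/3})+O(\log x)$, establishing the first displayed identity.

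For the second formula, the only remaining task is to estimate $\tfrac12\pi_{2k}(x^{1/2})$ sharply. When $2k>-1$, I would apply Abel summation with $\pi(t)=t/\log t + O(t/\log^2 t)$ and invoke Lemma 1 (with $\ell=2k$ and $r=-1$) to obtain
$$
\pi_{2k}(x^{1/2}) = \frac{2\,x^{k+1/2}}{(2k+1)\log x} + O\!\left(\frac{x^{k+1/2}}{\log^2 x}\right) = O\!\left(\frac{x^{k+1/2}}{\log x}\right).
$$
When $2k=-1$ Mertens' theorem gives $\pi_{2k}(x^{1/2})=O(\log\log x)$, and when $2k<-1$ convergence of the prime zeta function at $-2k>1$ gives $O(1)$; in either case the contribution is absorbed by $O(\log x)$. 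Substituting this into the first formula yields the second.

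The main obstacle will be bookkeeping in the range $-1<k<0$, where as $m$ grows the exponent $mk$ crosses the critical value $-1$ and changes which of the three estimates dominates. The saving feature is that at most one $m$ can give $mk$ exactly equal to $-1$, so the borderline $O(\log y)$ bounds do not accumulate across the tail; once the analysis is partitioned into the three regimes $mk>-1$, $mk=-1$, and $mk<-1$, the remaining estimates are routine applications of the comparison bounds above.
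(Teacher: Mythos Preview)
Your plan is correct and follows essentially the same line as the paper's proof: start from the identity $\Pi_k(x)-\pi_k(x)-\tfrac12\pi_{2k}(x^{1/2})=\sum_{m\ge 3}\tfrac1m\pi_{mk}(x^{1/m})$, observe that only $O(\log x)$ terms are nonzero, and bound the tail by the $m=3$ term. The only cosmetic differences are that the paper uses the slightly sharper Chebyshev bound $\pi_{mk}(y)\le \pi(y)\,y^{mk}\ll y^{mk+1}/\log y$ (for $k\ge 0$) in place of your comparison with $\sum_{n\le y}n^{mk}$, and for the second formula it quotes the known asymptotic $\pi_{2k}(y)\sim\pi(y^{2k+1})$ rather than rederiving it via Abel summation and the prime number theorem; both routes give the required $O(x^{k+1/2}/\log x)$.
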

\begin{proof}
There are $O(\log x)$ positive integers $\ell$ such that $x^{1/\ell}\ge 2$, so the sum in (\ref{Correct}) has $O(\log x)$ nonzero terms. 
When $k\ge 0$, we have
$$
\pi_{k}(y) \le \pi(y) y^k \le 1.3\frac{y^{k+1}}{\log y}
$$
(the first inequality is the trivial estimate and the second is in \cite{RS}).
Therefore,  
$$\frac13\pi_{3k}(x^{1/3})+\frac14\pi_{4k}(x^{1/4}) + \cdots = O\left(\frac{x^{k+\frac13}}{\log x}\log x\right) = O(x^{k+\frac13}).$$

Now suppose $-1<k<0$. Let $n$ be the largest positive integer such that $nk\ge -1$. Since $(n+1)k<-1$, we have $\pi_{mk}(y)\le \pi_{(n+1)k}(y) = O(1)$ for all $m\ge n+1$, and at most $O(\log x)$ terms in (\ref{Correct}) are nonzero.
Moreover, $\pi_{2k}(y)\sim \pi(y^{2k+1})$, from which it follows that
\begin{align*}
\Pi_k(x) &= \pi_k(x) + \frac12\pi_{2k}(x^{1/2}) \\
&+ \frac{x^{k+\frac13}}{(3k+1) \log(x)} + o\left(\frac{x^{k+\frac13}}{(3k+1) \log(x)}\right) \\
&+ \cdots + \frac{x^{k+\frac1n}}{(nk+1) \log(x)}+ o\left(\frac{x^{k+\frac1n}}{(nk+1) \log(x)}\right)  +  O(\log x)\\
&= \pi_k(x) + \frac12\pi_{2k}(x^{1/2})+ O\left(\frac{x^{k+\frac13}}{\log x}\right) + O(\log x),
\end{align*}
as desired (the implied constants depend on $k$, but not on $x$).

The second equality of the lemma follows from the first equality and the fact that $\pi_{2k}(y)\sim \pi(y^{2k+1})$.
\end{proof}

In \cite{GW}, it was proved that $\Pi_k(x)-\Pi_0(x^{k+1}) = \Omega_{\pm} (x^{\theta_0 + k -\epsilon})$. When $k>0$, Lemma \ref{Lemma1} allows us to change $\Pi_k(x)$ and $\Pi_0(x^{k+1})$ to $\pi_k(x)$ and $\pi(x^{k+1})$ with errors
of $o(x^{k+\frac12})$ and $o(x^{\frac{k+1}2})$, respectively. These are dominated by the oscillation term if the Riemann Hypothesis is false.

Henceforth, we assume the Riemann Hypothesis (RH)  is true and deduce Theorem 1 in this case.

\begin{lemma}\label{Lemma2} Let $k$ be a real number and let $c> k+1$. If $x>0$, then
$$
\int_2^x \psi_k(t)\, dt = \frac{-1}{2\pi i} \int_{c-i\infty}^{c+i\infty} \frac{\zeta'(s-k)}{\zeta(s-k)} \frac{x^{s+1} ds}{s(s+1)}.
$$
\end{lemma}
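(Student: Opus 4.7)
The plan is to show that both sides equal the common explicit sum
$$
S(x) \;=\; \sum_{n\le x} \Lambda(n)\, n^k\, (x-n).
$$

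For the left side, I would simply swap the order of summation and integration: since $\psi_k(t)=\sum_{n\le t}\Lambda(n)n^k$ and $\Lambda(1)=0$, Fubini yields
$$
\int_2^x \psi_k(t)\,dt \;=\; \sum_{n\le x}\Lambda(n)\, n^k\!\int_{n}^{x} dt \;=\; S(x),
$$
the lower limit $2$ being harmless because $\Lambda$ is supported on prime powers $n\ge 2$.

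For the right side, I would start from the absolutely convergent Dirichlet series
$$
-\frac{\zeta'(s-k)}{\zeta(s-k)} \;=\; \sum_{n=1}^{\infty} \frac{\Lambda(n)\, n^k}{n^s}, \qquad \mathrm{Re}(s) > k+1,
$$
and use the Perron-type identity, valid for any $c>0$,
$$
\frac{1}{2\pi i} \int_{c-i\infty}^{c+i\infty} \frac{y^{s}\, ds}{s(s+1)} \;=\;
\begin{cases} 1-1/y & \text{if } y>1,\\ 0 & \text{if } 0<y\le 1.\end{cases}
$$
This identity is a quick contour calculation: for $y>1$ close the contour to the left and sum the residues $1$ at $s=0$ and $-1/y$ at $s=-1$; for $y\le 1$ close to the right, where there are no poles. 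Setting $y=x/n$ and multiplying by $x$ converts the stated right-hand side of the lemma, after term-by-term integration, into $\sum_{n<x}\Lambda(n)n^k(x-n)$, which equals $S(x)$ since any boundary term at $n=x$ vanishes because of the factor $x-n$.

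The only nontrivial step is the interchange of sum and integral. On the line $\mathrm{Re}(s)=c$ the kernel $x^{s+1}/(s(s+1))$ is dominated by $C\, x^{c+1}/(1+t^2)$, and since $c>k+1$ the series $\sum_n \Lambda(n) n^k/n^c$ converges; Fubini then applies, with no need to truncate the contour or estimate tail integrals. The presence of the denominator $s(s+1)$ rather than the usual $s$ is precisely what produces this absolute convergence and makes the lemma an exact identity rather than an approximate Perron-type formula. I do not expect any serious obstacle beyond this routine Fubini check.
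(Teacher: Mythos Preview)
Your proposal is correct and is precisely the classical argument the paper invokes by citing Ingham: one shows that both sides equal $\sum_{n\le x}\Lambda(n)n^k(x-n)$, using the absolutely convergent Perron kernel $1/(s(s+1))$ and the Dirichlet series for $-\zeta'/\zeta$, with Fubini justified exactly as you indicate. There is nothing to add; your write-up simply makes explicit what the paper leaves to the reference.
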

\begin{proof} The proof is identical to the proof in \cite[pp. 31-32]{I}.
\end{proof}
\begin{lemma}\label{Lemma3} Assume RH. Let 
$$
A=\text{Res}_{s=0}\left( \frac{\frac{\zeta'(s-k)}{\zeta(s-k)}}{s(s+1)} \right), \qquad B=\text{Res}_{s=-1}\left( \frac{\frac{\zeta'(s-k)}{\zeta(s-k)}}{s(s+1)} \right).
$$
If $k>-1$, then 
$$
\int_2^{x} \psi_k(t)\, dt = \frac{x^{k+2}}{(k+1)(k+2)} - \sum_{\rho} \frac{x^{\rho+k+1}}{(\rho+k)(\rho+k+1)} -Ax+B+O(x^{k-1})
$$
as $x\to\infty$. The sum is over the zeros $\rho$ of $\zeta(s)$ with $\text{Re}(\rho)=1/2$, counted with multiplicity.
\end{lemma}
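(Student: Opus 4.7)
The strategy is the standard explicit-formula contour shift applied to the representation in Lemma 2. Writing
$$
F(s)\;:=\;\frac{\zeta'(s-k)}{\zeta(s-k)}\cdot\frac{x^{s+1}}{s(s+1)},
$$
I would move the line of integration from $\operatorname{Re}(s)=c$ (with $c>k+1$) to $\operatorname{Re}(s)=-U$ for a large parameter $U$, letting the height tend to infinity through a sequence $T_n$ of ordinates that keep a positive distance from every imaginary part of a nontrivial zero of $\zeta$. On the starting line the Dirichlet-series expansion of $\zeta'/\zeta$ converges absolutely and the factor $1/(s(s+1))$ provides $O(|t|^{-2})$ decay, so no Perron-type truncation is needed at the outset; by the residue theorem, $\int_2^x\psi_k(t)\,dt$ equals minus the sum of residues of $F$ in the strip $-U<\operatorname{Re}(s)<c$ plus the shifted vertical integral plus the horizontal tails.

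The poles of $F$ swept out during the shift are easy to enumerate. There is a simple pole at $s=k+1$ from the pole of $\zeta(s-k)$ at $s-k=1$, and after the overall factor $-1/(2\pi i)$ it yields the main term $\frac{x^{k+2}}{(k+1)(k+2)}$. There is a simple pole at each $s=\rho+k$ for every nontrivial zero $\rho$ of $\zeta$ (counted with multiplicity); the corresponding residues assemble into the series $-\sum_\rho\frac{x^{\rho+k+1}}{(\rho+k)(\rho+k+1)}$, which is absolutely convergent since the general term is $O(x^{k+3/2}/|\rho|^{2})$ and $\sum_\rho |\rho|^{-2}<\infty$. The factor $1/s$ produces a pole at $s=0$ whose residue, once $x^{s+1}|_{s=0}=x$ is included, gives the $-Ax$ term; similarly the factor $1/(s+1)$ produces the pole at $s=-1$ that yields the constant $B$ term since $x^{s+1}|_{s=-1}=1$. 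Finally, the trivial zeros of $\zeta$ contribute simple poles at $s=k-2j$ for $j\ge1$ with residues $\frac{x^{k-2j+1}}{(k-2j)(k-2j+1)}$; the leading $j=1$ term has size $x^{k-1}$ and the subsequent ones form an absolutely convergent tail of strictly smaller order, so together they are absorbed into the $O(x^{k-1})$ error.

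The technical work of the proof is the contour analysis. Under RH one has the standard bound $\zeta'(s-k)/\zeta(s-k)=O(\log^2|t|)$ on vertical strips at a bounded distance from zeros (see \cite[Chapter 12]{MV}); combined with the $|s(s+1)|^{-1}$ decay, the horizontal tails between $\operatorname{Re}(s)=-U$ and $\operatorname{Re}(s)=c$ at height $T_n$ contribute $O\bigl((c+U)\,x^{c+1}\log^2 T_n\,/\,T_n^2\bigr)$ and vanish as $T_n\to\infty$. On the shifted line $\operatorname{Re}(s)=-U$ the functional equation forces $\zeta'(s-k)/\zeta(s-k)=O(\log|t|)$ away from trivial zeros, so the shifted integral is bounded by a fixed multiple of $x^{1-U}$, which is $O(x^{k-1})$ once $U\ge 2-k$. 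The main obstacle I expect is the routine but careful bookkeeping: choosing the heights $T_n$ so that the bound on $\zeta'/\zeta$ applies uniformly, and tracking the $U$-dependent implied constants as the contour crosses additional trivial zeros for large $k$. These are modest adaptations of the classical derivation of the explicit formula for $\psi(x)$ and constitute the only substantive analytic work.
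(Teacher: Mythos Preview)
Your proposal is correct and is exactly the approach the paper has in mind: the paper's own proof says only that one moves the line of integration to the left and that the details are identical to those in Ingham \cite[pp.~73--74]{I} for the case $k=0$. Your outline fills in precisely those details---the residues at $s=k+1$, $s=\rho+k$, $s=0$, $s=-1$, and the trivial-zero contributions $s=k-2j$ giving the $O(x^{k-1})$ tail, together with the standard horizontal and shifted-vertical estimates---so there is nothing to add.
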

\begin{proof}
The proof proceeds by moving the line of integration to the left. The details are the same as in \cite[pp. 73-74]{I}, where the proof is given when $k=0$.
\end{proof}

\begin{lemma}\label{Lemma4} Assume RH and let $k>-1$. Then
$$
\int_2^{x} \psi_k(t)dt = \sum_{n\le x} (x-n)\Lambda(n) n^k = \frac{x^{k+2}}{(k+1)(k+2)} + O(x^{k+\frac32})+ O(x).
$$
\end{lemma}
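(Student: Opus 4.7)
The plan is to derive the two equalities separately. The first is an elementary Fubini-type rearrangement, while the second is a direct consequence of Lemma~\ref{Lemma3} once one has an absolute bound on the sum over zeros.

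For the identity $\int_2^x \psi_k(t)\, dt = \sum_{n\le x}(x-n)\Lambda(n) n^k$, I would expand $\psi_k(t) = \sum_{2\le n \le t}\Lambda(n) n^k$ and swap the order of summation and integration (only finitely many terms contribute for $t\le x$, so there is no convergence issue). For each $n$ with $2\le n\le x$, the condition $n\le t$ restricts integration to $t\in[n,x]$, contributing $(x-n)\Lambda(n)n^k$. The terms with $n=1$ vanish because $\Lambda(1)=0$.

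For the asymptotic, I would invoke Lemma~\ref{Lemma3} and bound the zero sum. Under RH every nontrivial zero $\rho$ has $\text{Re}(\rho)=\tfrac12$, so
$$
\left| \sum_\rho \frac{x^{\rho+k+1}}{(\rho+k)(\rho+k+1)}\right| \;\le\; x^{k+\frac32}\sum_\rho \frac{1}{|(\rho+k)(\rho+k+1)|}.
$$
The well-known convergence of $\sum_\rho|\rho|^{-2}$ (see \cite{MV}), together with the fact that $|(\rho+k)(\rho+k+1)|\gg|\rho|^2$ for $|\text{Im}(\rho)|$ large (the finitely many low-lying zeros contribute a bounded quantity since $k>-1$ keeps the denominator nonzero), shows that the last series converges. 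Hence the zero sum is $O(x^{k+\frac32})$.

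Finally, the terms $-Ax+B$ are $O(x)$, and the remainder $O(x^{k-1})$ from Lemma~\ref{Lemma3} is absorbed into $O(x)$ when $k\le 2$ and into $O(x^{k+\frac32})$ when $k>2$. Collecting these contributions gives the stated bound. The only potentially delicate point is the absolute convergence of the zero sum, but this is immediate from standard estimates; everything else is bookkeeping.
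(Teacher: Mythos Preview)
Your proof is correct and follows essentially the same route as the paper. The paper obtains the first identity via Stieltjes integration by parts (writing $\int_2^x \psi_k(t)\,dt = x\psi_k(x) - \int_{2-}^x t\,d\psi_k(t)$) rather than your Fubini argument, but these are equivalent one-line computations; for the asymptotic both proofs invoke Lemma~\ref{Lemma3} together with the absolute convergence of $\sum_\rho 1/\rho^2$.
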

\begin{proof}
The first equality is proved by integration by parts:
$$
\int_2^x \psi_k(t) dt= x\psi_k(x) - \int_{2-}^x t\, d\psi_k(t)  = x\sum_{n\le x} \Lambda(n) n^k - \sum_{n\le x} n^{k+1} \Lambda(n),
$$
which yields the result. 

The second equality follows from Lemma \ref{Lemma3} and the absolute convergence of $\sum 1/\rho^2$.
\end{proof}

\begin{lemma}\label{Lemma5} Assume RH and let $k>-1$. Then 
$$
\pi_k(x)- \pi(x^{k+1}) = \frac{\psi_k(x)-\frac{x^{k+1}}{k+1}}{\log x}  + O(x^{\frac{k+1}2} \log x) + O\left(\frac{x^{k+\frac12}}{\log x}\right).
$$
\end{lemma}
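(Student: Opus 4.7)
The plan is to reduce $\pi_k(x) - \pi(x^{k+1})$ to a difference of $\psi$-type quantities by passing through the sums $\Pi_k$ and $\Pi_0$ and then applying summation by parts. First I would invoke Lemma~\ref{Lemma1}, once with exponent $k$ and once with exponent $0$, to write
$$\pi_k(x) = \Pi_k(x) + O\!\left(\tfrac{x^{k+1/2}}{\log x}\right) + O(\log x), \quad \pi(x^{k+1}) = \Pi_0(x^{k+1}) + O\!\left(\tfrac{x^{(k+1)/2}}{\log x}\right) + O(\log x),$$
both errors already fitting comfortably inside the target bound (the $O(\log x)$ terms lie inside $O(x^{(k+1)/2}\log x)$ for $k>-1$). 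Summation by parts against $1/\log t$ then yields
$$\Pi_k(x) = \frac{\psi_k(x)}{\log x} + \int_2^x \frac{\psi_k(t)}{t\log^2 t}\,dt, \qquad \Pi_0(x^{k+1}) = \frac{\psi(x^{k+1})}{(k+1)\log x} + \int_2^{x^{k+1}} \frac{\psi(t)}{t\log^2 t}\,dt.$$

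Next I would insert $\psi_k(t) = t^{k+1}/(k+1) + R_k(t)$ and $\psi(t) = t + R(t)$ into these two integrals. The substitution $u = t^{k+1}$ shows that the main contributions $(k+1)^{-1}\int_2^x t^k/\log^2 t\,dt$ and $\int_2^{x^{k+1}} du/\log^2 u$ differ only by an additive $O(1)$, so they cancel when $\Pi_0(x^{k+1})$ is subtracted from $\Pi_k(x)$. For the remainder integrals I would integrate once more by parts using the antiderivatives $S_k(x) := \int_2^x R_k(t)\,dt$ and $S(y) := \int_2^y R(t)\,dt$; Lemma~\ref{Lemma4} (applied at $k$ and at $0$) gives $S_k(x) = O(x^{k+3/2}) + O(x)$ and $S(y) = O(y^{3/2})$, from which a routine computation yields contributions of size $O(x^{k+1/2}/\log^2 x)$ and $O(x^{(k+1)/2}/\log^2 x)$, both absorbed into the stated errors.

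The remaining non-oscillatory piece is $[\psi(x^{k+1}) - x^{k+1}]/((k+1)\log x)$, which by the classical Riemann Hypothesis bound $\psi(y) - y = O(y^{1/2}\log^2 y)$ evaluated at $y = x^{k+1}$ contributes $O(x^{(k+1)/2}\log x)$; this is the dominant error in the lemma. Assembling all the pieces then gives the desired identity.

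The main obstacle is technical rather than conceptual: the two main integrals must cancel exactly, not merely at leading order, because a term-by-term asymptotic expansion of each leaves a residue of order $x^{k+1}/\log^3 x$ that would swamp the $x^{k+1/2}$-scale oscillation being isolated. The substitution $u = t^{k+1}$ bypasses this cleanly, and once that step is in hand the rest of the argument is uniform in $k > -1$, with only mild bookkeeping needed when $k$ is close to $-1$ so that auxiliary integrals such as $\int_2^x t^{k-1/2}/\log^2 t\,dt$ still lie within the claimed error bounds.
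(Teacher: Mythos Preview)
Your argument is correct and follows essentially the same route as the paper: partial summation on $\Pi_k$, the substitution $u=t^{k+1}$ to match the main terms, Lemma~\ref{Lemma4} to control the remainder integral, and RH for the dominant $O(x^{(k+1)/2}\log x)$ error. The only cosmetic difference is that you symmetrize by also writing $\Pi_0(x^{k+1})$ via partial summation and invoking $\psi(y)-y=O(y^{1/2}\log^2 y)$, whereas the paper recognizes $\int_2^x t^k/\log t\,dt$ as $\mathrm{li}(x^{k+1})+O(1)$ directly and then quotes $\pi(y)=\mathrm{li}(y)+O(y^{1/2}\log y)$.
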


\begin{proof}
We first prove the lemma with $\Pi_{k}(x)$ in place of $\pi_k(x)$:
\begin{align*}
\Pi_k(x) &= \int_{2-}^x\frac {d\psi_k(t)}{\log t} \\
&= \frac{\psi_k(x)}{\log x} + \int_2^x \frac{\psi_k(t)-\frac{t^{k+1}}{k+1}}{t\log^2t} dt + \frac{1}{k+1}\int_2^x \frac{t^k}{\log^2 t} dt\\
&= \frac{\psi_k(x)}{\log x} + I_1 + I_2.
\end{align*}
Integration by parts yields
\begin{align*}
I_1 &= \frac{\sum_{n\le x}  (x-n)\Lambda(n) n^k - \frac{x^{k+2}}{(k+1)(k+2)}}{x \log^2 x} + O(1)\\
& + \int_2^x \frac{\sum_{n\le t} (t-n) \Lambda(n) n^k - \frac{t^{k+2}}{(k+1)(k+2)}}{t^2\log^2 t} \left(1+\frac{2}{\log t}\right) dt\\
&= O\left(\frac{x^{k+\frac12}}{\log^2 x} \right) + O(1)+ \int_2^x \frac{O(t^{k+\frac32})+O(t)}{t^2 \log^2 t} dt \quad (\text{by Lemma \ref{Lemma4}})\\
&= O\left(\frac{x^{k+\frac12}}{\log^2 x}\right)+ O(1).
\end{align*}
Integration by parts also yields
\begin{align*}
I_2 &= \frac{1}{k+1} \int_2^x \frac{t^{k+1}}{t \log^2 t} dt \\
&= \frac{-x^{k+1}}{(k+1)\log x} + \int_2^x \frac{t^k}{\log t} dt + O(1)\\
&= \frac{-x^{k+1}}{(k+1)\log x} + \int_2^ {x^{k+1}} \frac{du}{\log u} + O(1) \quad \text{ (substitute $u=t^{k+1}$)}\\
&= \frac{-x^{k+1}}{(k+1) \log x} + \pi(x^{k+1}) + O\left(x^{\frac{k+1}{2}}\log x\right). 
\end{align*}
In the last equality, we have used the fact \cite[Theorem 13.1]{MV} that the Riemann Hypothesis implies $\pi(y)= Li(y)+ O(y^{1/2}\log y)$. Putting everything together yields 
\begin{equation}\label{PiandPsi}
\Pi_k(x) =  \frac{\psi_k(x)-\frac{x^{k+1}}{k+1}}{\log x} + \pi(x^{k+1}) + O(x^{\frac{k+1}2} \log x)+ O\left(\frac{x^{k+\frac12}}{\log^2 x}\right).
\end{equation}

Since Lemma \ref{Lemma1} tells us that
$$
\Pi_k(x) =  \pi_k(x) + O\left(\frac{x^{k+\frac12}}{\log x}\right) + O(\log x),
$$
the result of the lemma follows.
\end{proof}

Lemma \ref{Lemma5} translates oscillations of $\psi_k(x)$ into oscillations of $\pi_k(x) - \pi(x^{k+1})$. We now use a method of Littlewood, as modified by Ingham, to produce oscillations in $\psi_k(x)$.
The following lemma allows us to look at $\psi_k(x)$ averaged over a small interval. 

\begin{lemma}\label{Lemma6} Assume RH and let $k>-1$.  Then, uniformly for $x\ge 4$ and $\frac{1}{2x} \le \delta\le \frac12$, 
\begin{align*}
&\frac{1}{(e^{\delta} - e^{-\delta})x} \int_{e^{-\delta}x}^{e^{\delta}x} \left(\psi_k(u) - \frac{u^{k+1}}{k+1}\right) du \\  \\
&= -2x^{k+\frac12} \sum_{\gamma > 0} \frac{\sin(\gamma\delta)}{\gamma\delta}
\frac{\sin(\gamma \log x)}{\gamma} + O(x^{\frac12 + k}).
\end{align*}
\end{lemma}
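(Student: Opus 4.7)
The plan is to use Lemma~\ref{Lemma3} as the starting point and evaluate its formula with $x$ replaced by $e^{\delta}x$ and by $e^{-\delta}x$. The main term $\frac{x^{k+2}}{(k+1)(k+2)}$ from that lemma produces, upon subtraction, exactly $\int_{e^{-\delta}x}^{e^{\delta}x}\frac{u^{k+1}}{k+1}\,du$, so it cancels when this integral is subtracted from both sides. Dividing what remains by $(e^{\delta}-e^{-\delta})x = 2x\sinh\delta$ and using $1/\sinh\delta\le 1/\delta\le 2x$ in the stated range to absorb the $-A$ and $O(x^{k-1})$ terms of Lemma~\ref{Lemma3} into an error of size $O(x^{k-1}) = O(x^{k+1/2})$, I arrive at
\[
\frac{1}{(e^\delta-e^{-\delta})x}\int_{e^{-\delta}x}^{e^\delta x}\!\!\left(\psi_k(u)-\frac{u^{k+1}}{k+1}\right)du \;=\; -\sum_{\rho}\frac{x^{\rho+k}\sinh(\delta(\rho+k+1))}{(\rho+k)(\rho+k+1)\sinh\delta} \;+\; O(x^{k+1/2}).
\]

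Under RH every nontrivial zero is $\rho = \tfrac12+i\gamma$, so pairing $\rho$ with $\bar\rho$ converts the sum into $2\,\text{Re}$ of a sum over $\gamma>0$. Setting $a=\tfrac12+k$, $b=\tfrac32+k$ and expanding
\[
\sinh(\delta(b+i\gamma)) = \sinh(\delta b)\cos(\delta\gamma) + i\cosh(\delta b)\sin(\delta\gamma),
\]
\[
\frac{1}{(a+i\gamma)(b+i\gamma)} = -\frac{1}{\gamma^2} - \frac{i(a+b)}{\gamma^3} + O\!\left(\frac{1}{\gamma^4}\right),
\]
I multiply by $e^{i\gamma\log x}$ and take real parts. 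The leading $\gamma^{-2}$ contribution of the real part is $\cosh(\delta b)\sin(\delta\gamma)\sin(\gamma\log x)/\gamma^2$; after dividing by $\sinh\delta$ and replacing $\cosh(\delta b)/\sinh\delta$ by $1/\delta$, this is precisely the series asserted in the lemma.

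The core quantitative step is to show that the replacement discrepancy and all secondary contributions are $O(1)$ uniformly for $\delta\in[1/(2x),\tfrac12]$, so that multiplication by $2x^{a}=2x^{1/2+k}$ yields the claimed $O(x^{1/2+k})$ error. Taylor expansion gives $\delta\cosh(\delta b)-\sinh\delta = O(\delta^3)$, hence $\left|\cosh(\delta b)/\sinh\delta-1/\delta\right| = O(\delta)$; combined with
\[
\sum_{\gamma>0}\frac{|\sin(\delta\gamma)|}{\gamma^2} \;\le\; \delta\sum_{\gamma\le 1/\delta}\frac{1}{\gamma} + \sum_{\gamma>1/\delta}\frac{1}{\gamma^2} \;=\; O(\delta\log^2(1/\delta)),
\]
which follows from the Riemann--von Mangoldt density $N(T)\sim (T/2\pi)\log T$, this produces a replacement error of $O(\delta^2\log^2(1/\delta)) = O(1)$. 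The companion real-part term $-\sinh(\delta b)\cos(\delta\gamma)\cos(\gamma\log x)/\gamma^2$ divided by $\sinh\delta$ is handled by the boundedness of $\sinh(\delta b)/\sinh\delta$ on $[0,\tfrac12]$ together with $\sum 1/\gamma^2<\infty$. The $1/\gamma^3$ and $1/\gamma^4$ tails always appear paired with either $\sinh(\delta b)$ or $\sin(\delta\gamma)$, and splitting the sum at $\gamma=1/\delta$ shows each such tail is $O(1)$.

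The main obstacle is securing this uniformity near the lower endpoint $\delta=1/(2x)$: the prefactor $1/\sinh\delta$ grows like $1/\delta$ and can be as large as $2x$, so by itself it would destroy the $O(x^{1/2+k})$ error; the rescue is that every occurrence of $1/\sinh\delta$ is paired, after the expansion of $\sinh(\delta(b+i\gamma))$, with either a $\sinh(\delta b)$ or a $\sin(\delta\gamma)\le\min(1,\delta\gamma)$ factor coming from the numerator. Keeping track of these compensations through the four cross-products of real and imaginary parts, while verifying that no $\log x$ factor leaks into the error, is the delicate part of the argument.
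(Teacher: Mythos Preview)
Your proposal is correct and follows essentially the same route as the paper: the paper's proof is simply ``The proof follows from Lemma~\ref{Lemma3}, as in the proof of \cite[Lemma 15.9]{MV},'' and what you have written is precisely a detailed execution of that plan---difference the explicit formula of Lemma~\ref{Lemma3} at $e^{\pm\delta}x$, divide by $2x\sinh\delta$, pair conjugate zeros, and extract the main term $-2x^{k+1/2}\sum_{\gamma>0}\frac{\sin(\gamma\delta)}{\gamma\delta}\frac{\sin(\gamma\log x)}{\gamma}$ while checking that the secondary pieces are $O(1)$ uniformly in $\delta$. One small quibble: the residual constant $-A$ after division is $O(1)$, which is absorbed by $O(x^{k+1/2})$ only when $k>-\tfrac12$; this is harmless for the paper's applications (where $k>0$) but worth noting for the full stated range $k>-1$.
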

\begin{proof} The proof follows from Lemma \ref{Lemma3}, as in the proof of  \cite[Lemma 15.9]{MV}. \end{proof}

Finally, the proof of Theorem 1 can now be completed using a Diophantine approximation argument, as in the proof of 
\cite[Theorem 15.11]{MV}. In particular,  suitable large values of $x$ coupled with small values  $\delta$ can be found to show that the sum of the right side of the formula in Lemma \ref{Lemma6} is $\Omega_{\pm}(\log\log\log x)$.
Since the left side is the average over an interval, we find that 
$$
\psi_k(x) - \frac{x^{k+1}}{k+1} = \Omega_{\pm} \left( x^{k+\frac12}\log\log\log x\right).
$$
When $k>0$ in Lemma \ref{Lemma5}, the oscillation term dominates. This  yields the desired result for $\pi_k(x) - \pi(x^{k+1})$ and completes the proof of Theorem 1. \end{proof}

\section{Proof of Theorem 2}

\begin{proof}
Recall that
$$
\sum_{p\le x} \frac1p = \log\log x + B + o(1),
$$
where $B= 0.261497\dots$ is a constant \cite{RS}.

The substitution $u=t^{k+1}$ yields
\begin{align*}
\int_1^x \frac{\pi(t^{k+1})}{t^{k+2}} dt &= \frac{1}{k+1} \int_1^{x^{k+1}}\frac{\pi(u) }{u^2} du\\
&= \frac{-1}{k+1}\frac{\pi(x^{k+1})}{x^{k+1}} + \frac{1}{k+1} \int_1^{x^{k+1}} \frac{1}{u} d\pi(u)\\
&=\frac{-1}{k+1}\frac{\pi(x^{k+1})}{x^{k+1}} +\frac{1}{k+1}\sum_{p\le x^{k+1}} \frac1p \\
&= \frac{1}{k+1} \log\log (x^{k+1}) + \frac{B}{k+1} + o(1).
\end{align*}
Similarly,
\begin{align*}
\int_1^x \frac{\pi_k(t)}{t^{k+2}} dt &=  \frac{-1}{k+1} \frac{\pi_k(x)}{x^{k+1}} + \frac{1}{k+1}\int_1^x \frac{1}{t^{k+1}}d\pi_k(t)\\
&= \frac{-1}{k+1} \frac{\pi_k(x)}{x^{k+1}} + \frac{1}{k+1}\sum_{p\le x} \frac{1}{p^{k+1}}p^k\\
&= \frac{1}{k+1} \log\log (x) + \frac{B}{k+1} + o(1).
\end{align*}
Therefore,
$$
\int_1^x \frac{\pi_k(t) - \pi(t^{k+1})}{t^{k+2}} dt = \frac{-1}{k+1} \log(k+1) + o(1).
$$
This yields the theorem.\end{proof}

\section{Proof of Theorem 3}

\begin{proof} Assume the Riemann Hypothesis is true. 
We need the following technical result:
\begin{lemma}\label{Lemma7} Assume RH and let $k>0$. Then
$$
\left|\int_2^x\frac{\psi_k(t) - \frac{t^{k+1}}{(k+1)}}{\log t}dt\right| < 0.04621\frac{x^{k+\frac32}}{\log x}
$$
for all sufficiently large $x$.
\end{lemma}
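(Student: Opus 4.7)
The plan is to extract $G(x) := \int_2^x \bigl(\psi_k(t) - t^{k+1}/(k+1)\bigr)\,dt$ from Lemma \ref{Lemma3}, bound $|G(x)|$ under RH, and then transfer that bound to the integral in the statement by integration by parts against $1/\log t$.

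First, subtracting $\int_2^x t^{k+1}/(k+1)\,dt = (x^{k+2}-2^{k+2})/((k+1)(k+2))$ from the explicit formula in Lemma \ref{Lemma3} cancels the main term $x^{k+2}/((k+1)(k+2))$ and leaves
$$G(x) = -\sum_{\rho}\frac{x^{\rho+k+1}}{(\rho+k)(\rho+k+1)} + O(x) + O(x^{k-1}).$$
Under RH each $\rho$ satisfies $|x^{\rho+k+1}| = x^{k+3/2}$, so $|G(x)| \le S_k\, x^{k+3/2} + O(x+x^{k-1})$, where $S_k := \sum_\rho 1/(|\rho+k|\,|\rho+k+1|)$.

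Second, I would show $S_k < 0.04621$ for every $k>0$. Writing $\rho = 1/2 + i\gamma$ and expanding,
$$|\rho+k|^2|\rho+k+1|^2 = \bigl((k+\tfrac12)^2+\gamma^2\bigr)\bigl((k+\tfrac32)^2+\gamma^2\bigr) > \bigl(\tfrac14+\gamma^2\bigr)^2 = |\rho(1-\rho)|^2$$
holds termwise for $k>0$, so $S_k < \sum_\rho 1/(\rho(1-\rho))$. On RH each $\rho(1-\rho) = \tfrac14+\gamma^2$ is real and positive, and the classical identity $\sum_\rho 1/(\rho(1-\rho)) = 2 + \gamma_E - \log(4\pi) \approx 0.04619$, derived from the Hadamard product for $\xi(s)$, yields $S_k < 0.04619 < 0.04621$ uniformly in $k>0$, with a small but positive margin.

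Finally, integration by parts with $u = 1/\log t$, $v = G(t)$ (using $G(2)=0$) gives
$$\int_2^x \frac{\psi_k(t) - t^{k+1}/(k+1)}{\log t}\,dt = \frac{G(x)}{\log x} + \int_2^x \frac{G(t)}{t\log^2 t}\,dt.$$
The first term is bounded by $S_k\, x^{k+3/2}/\log x$ plus lower order; the correction integral, evaluated by the unlabeled integration lemma with $\ell = k+\tfrac12$ and $r = -2$, is $O\bigl(x^{k+3/2}/\log^2 x\bigr)$, smaller by a factor of $1/\log x$. For $x$ sufficiently large the total is strictly below $0.04621\, x^{k+3/2}/\log x$. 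The main obstacle is numerical sharpness: the margin between the bound $S_k \le 0.04619$ and the target $0.04621$ is only about $2 \times 10^{-5}$, so one must verify carefully that the error terms $O(x)$, $O(x^{k-1})$, and the correction integral $O(x^{k+3/2}/\log^2 x)$ do not erode it. The extra factor of $1/\log x$ separating the correction from the main term guarantees this once $x$ is large enough in a manner depending on $k$.
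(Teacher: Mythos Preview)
Your proof is correct and follows essentially the same route as the paper: bound $G(x)=\int_2^x D(t)\,dt$ via the explicit formula of Lemma~\ref{Lemma3} and the absolute convergence of the sum over zeros, then integrate by parts against $1/\log t$ and absorb the $O(x^{k+3/2}/\log^2 x)$ correction. The only difference is the source of the numerical constant: the paper cites \cite{BPT} for $\sum_\rho 1/\gamma^2 < 0.04620999$ together with $|(\rho+k)(\rho+k+1)|>\gamma^2$, whereas you use the sharper comparison $|(\rho+k)(\rho+k+1)|>\tfrac14+\gamma^2$ and the classical closed form $\sum_\rho 1/(\rho(1-\rho))=2+\gamma_E-\log 4\pi\approx 0.04619$.
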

\begin{proof}
From \cite[Corollary 1]{BPT}, with the standard notation $\rho=\frac12+i\gamma$ for zeros of $\zeta(s)$ in the critical strip, we know that
$$
\sum_{\rho} \left| \frac{1}{(\rho+k)(\rho+k+1)} \right| < \sum_{\rho} \frac{1}{\gamma^2} < 0.04620999.
$$
Let $D(x) = \psi_k(x) - \frac{x^{k+1}}{(k+1)}$. Integration by parts yields
\begin{align*}
\int_2^x \frac{D(t)}{\log t} dt &= \frac{\int_2^x D(t)dt}{\log x} + \int_2^x \frac{\int_2^t D(u) du}{t\log^2 t} dt\\
&< 0.04620999 \frac{x^{k+\frac32}}{\log x}  + 0.04620999 \int_2^x \frac{t^{k+\frac12}}{\log^2 t} dt + o(x) +o(x^{k-1})\\
&(\text{from Lemma \ref{Lemma3}})\\
& < 0.04621  \frac{x^{k+\frac32}}{\log x} 
\end{align*}
when $x$ is large.
\end{proof}

We know that
$$
\Pi_k(x)=\pi_k(x)+\frac{x^{k+\frac12}}{(2k+1)\log x} + o\left(\frac{x^{k+\frac12}}{\log x}\right),
$$
which implies that
\begin{align*}
\int_1^x \left(\Pi_k(t)-\pi_k(t)\right) dt &= \int_2^x \frac{t^{k+\frac12}}{(2k+1)\log t} + o\left(\int_2^x \frac{t^{k+\frac12}}{\log t}\, dt\right) \\
&= (1+o(1)) \frac{x^{k+\frac32}}{(k+\frac32)(2k+1)\log x} 
\end{align*}
as $x\to\infty$.
From Lemma \ref{Lemma7} and (\ref{PiandPsi}),
$$
\int_1^x \left(\Pi_k(t) - \pi(t^{k+1})\right) dt \le  (0.04621+o(1)) \frac{x^{k+\frac32}}{(k+\frac32)\log x}
$$
for $x$ sufficiently large.
Therefore, 
$$
\int_1^x \left(\pi_k(t) - \pi(t^{k+1})\right) dt \le \left(0.04621-\frac{1}{2k+1}+o(1)\right) \frac{x^{k+\frac32}}{(k+\frac32)\log x} < 0
$$
for $x$ sufficiently large, when $k\le 10.32$.
\medskip

\noindent
{\bf Remark.} It would be possible to improve the upper bound 10.32 slightly by using knowledge of the first several values of $\gamma$ to estimate
the early terms of $\sum 1/(\rho+k)(\rho+k+1)$, but it follows from a theorem of Lehman (see \cite{BPT}) that this sum is approximately a constant times $\log(k)/k$, hence is larger than $1/(2k+1)$
for large $k$.
\medskip

Now suppose the Riemann Hypothesis is false. Then  
$$
\theta_0 = \sup\{\text{Re}(\rho) \, | \, \zeta(\rho)=0\} > \frac12.$$
 Let $0<\epsilon <\theta_0-\frac12$ and let 
$$
F(x) = \int_1^x \left(\Pi_k(t) - \Pi_0(t^{k+1})\right) dt.
$$
Then $F(x) = o\left(x^{k+2}\right)$ as $x\to\infty$.
Let $s\in \mathbb C$ with $\text{Re}(s) > k+1$. Then
\begin{align*}
G(s):=&\int_1^{\infty} \left(\frac{F(t) - t^{\theta_0+k+1-\epsilon}}{t^{s+2}}\right) dt\\
 &=\left.\frac{-F(t)}{(s+1)t^{s+1}}\right|_1^{\infty} + \frac{1}{s+1} \int_1^{\infty} \frac{\Pi_k(t) - \Pi_0(t^{k+1})}{t^{s+1}} dt\\
&  +\frac{1}{\theta_0+k-s-\epsilon}    \\
&= \frac{1}{s(s+1)}\left( \log\zeta(s-k) - \log\zeta(\frac{s}{k+1})\right)  +\frac{1}{\theta_0+k-s-\epsilon} .
\end{align*}
The last equality follows from a second integration by parts plus a change of variables in the second part of the integral; see \cite[p.  173]{GW}.
The last line represents a function that is analytic for all real numbers $s> \theta_0+k-\epsilon$. 

Let $\rho$ be the zero of $\zeta(s)$ with $\text{Re}(\rho)>\theta_0-\epsilon$ with smallest positive imaginary part. 
Then $\log\zeta(s-k)$ is not analytic at $\rho+k$. Since $\theta_0\le 1$,
$$
\theta_0 - \epsilon < \text{Re}(\rho) \le \text{Re}\left(\frac{\rho+k}{k+1}\right).
$$
Since the imaginary part of $(\rho+k)/(k+1)$ is less than the imaginary part of $\rho$, the choice of $\rho$ implies that $\zeta((\rho+k)/(k+1))\ne 0$ and therefore does not cancel the singularity at $\rho+k$.
Therefore, $G(s)$ is not analytic at $\rho+k$.

If $F(x) - x^{\theta_0+k+1-\epsilon}\le 0$ for all sufficiently large $x$, Landau's Theorem (see, for example, \cite{GW} or \cite{I}) implies that  $G(s)$ is analytic for all $s\in \mathbb C$ with
$\text{Re}(s) > \theta_0+k-\epsilon$. Since
$$
\text{Re}(\rho+k)> \theta_0+k-\epsilon ,
$$
this is a contradiction. Therefore, there is a sequence of $x\to \infty$ with
$$
 \int_1^x \left(\Pi_k(t) - \Pi_0(t^{k+1})\right) dt > x^{\theta_0+k+1-\epsilon}.
$$
We now need to change from $\Pi_k$ to $\pi_k$.  Since $\Pi_k(t) = \pi_k(t) + O(t^{k+\frac12}/\log x)$ and $\Pi_0(t^{k+1}) = \pi(t^{k+1}) + O(t^{\frac{k+1}{2}}/\log t)$, 
$$
\left| \int_1^x \left(\Pi_k(t) - \Pi_0(t^{k+1})\right)  dt -  \int_1^x \left(\pi_k(t) - \pi_0(t^{k+1})\right)  dt \right| = O( x^{k+\frac32}/\log x).
$$
Since $\theta_0+k+1-\epsilon > k+\frac32$, we find that there exists a sequence of $x\to\infty$ such that
$$
 \int_1^x \left(\pi_k(t) - \pi_0(t^{k+1})\right) dt >  0.
$$
\end{proof}
We note that the proof of ``(2) $\implies$ (1)'' in Theorem 3 works for all $k>0$.

\medskip

\noindent
{\bf Acknowledgement.}

Many thanks to Rusen Li for pointing out the mistake in \cite{GW}.

\end{document}